\newtheorem{proposition}{Proposition}[section]
\newtheorem{theorem}{Theorem}[section]
\newtheorem{corollary}{Corollary}[section]
\theoremstyle{definition}
\newtheorem{definition}{Definition}[section]
\begin{document}

\author{Cristian Ida}
\title{On tangential cohomology attached to a function on complex foliations}
\date{}
\maketitle
\title{}

\begin{abstract}
In this note we study a new cohomology attached to a function along the leaves of complex foliations. We also explain how this cohomology depends on the function and we study a relative cohomology and a Mayer-Vietoris sequence related to this cohomology.
\end{abstract}

\medskip 
\begin{flushleft}
\strut \textbf{2000 Mathematics Subject Classification:} 58A10, 58A12, 53C12, 32C36.

\textbf{Key Words:} complex foliations,  cohomology.
\end{flushleft}

\section{Introduction and preliminaries}
\setcounter{equation}{0} 

The study of geometry and cohomology of holomorphic foliations was initiated by I. Vaisman in \cite{Va, Va2}. In a different way, a Dolbeault cohomology  along the leaves of complex foliations  was intensively studied by A. El Kacimi-Alaoui in some recent papers (see \cite{E-K1, E-K2}). On the other hand, P. Monnier in \cite{Mo2} introduce a new cohomology of smooth manifolds, so called \textit{cohomology attached to a function}. This cohomology was considered for the first time in \cite{Mo1} in the context of Poisson geometry, and
more generally, Nambu-Poisson geometry. The main goal of this note is to extend some notions from \cite{E-K1, E-K2} concerning to \cite{Mo2}, giving a similar cohomology attached to a function for foliated  forms of type $(p,q)$ on complex foliations. In this sense, in the first section following \cite{E-K1, E-K2}, we briefly recall some preliminaries notions about compex foliations and $\overline{\partial}$-cohomology along the leaves. Next, we define a Dolbeault cohomology attached to a function for foliated forms of type $(p,q)$, we define an associated Bott-Chern cohomology, we explain how this cohomology depends on the function, we study a relative cohomology and a Mayer-Vietoris sequence related to this cohomology. In particular, we show that if the function does not vanish, then our cohomology is isomorphic with the Dolbeault cohomology along the leaves. The methods used here are similarly to those used by \cite{Mo2} and are closely related to those used by \cite{B-T}.

\subsection{Complex foliations}

Let $\mathcal{M}$ be a differentiable manifold of dimension $2m + n$ endowed with a
codimension $n$ foliation $\mathcal{F}$ (then the dimension of $\mathcal{F}$  is $2m$).

\begin{definition}
(\cite{E-K2}). The foliation $\mathcal{F}$ is said to be \textbf{complex} if it can be defined by an open cover $\{U_i\},\,i\in I$, of $\mathcal{M}$ and diffeomorphisms $\phi_i:\Omega_i\times\mathcal{O}_i\rightarrow U_i$ (where $\Omega_i$ is an open polydisc in $\mathbb{C}^m$ and $\mathcal{O}_i$ is an open ball in $\mathbb{R}^n$) such that, for every pair $(i,j)\in I\times I$ with $U_i\cap U_j\neq \phi$, the coordinate change 
$$\phi_{ij}=\phi_j^{-1}\circ\phi_i:\phi_i^{-1}(U_i\cap U_j)\rightarrow\phi_j^{-1}(U_i\cap U_j)$$
is of the form $(z^{'}, x^{'})=(\phi_{ij}^1(z,x), \phi_{ij}^2(x))$ with $\phi_{ij}^1(z,x)$ holomorphic in $z$ for $x$ fixed.
\end{definition}

An open set $U$ of $\mathcal{M}$ like one of the cover $\mathcal{U}$ is called adapted to the
foliation. Any leaf of $\mathcal{F}$ is a complex manifold of dimension $m$. The notion
of complex foliation is a natural generalization of the notion of holomorphic
foliation on a complex manifold (see \cite{Va2}). A manifold $\mathcal{M}$ with a complex foliation $\mathcal{F}$ will be denoted $(\mathcal{M}, \mathcal{F})$. With respect to local coordinates $(z,x)$, adapted to the complex foliation $\mathcal{F}$, the \textit{complex structure along the leaves} $J_{\mathcal{F}}:T_{\mathbb{C}}\mathcal{F}\rightarrow T_{\mathbb{C}}\mathcal{F}$, is given by 
$$J_{\mathcal{F}}(\frac
\partial {\partial z ^a})=i\frac \partial {\partial z ^a}\,,\,J_{\mathcal{F}}(\frac \partial {\partial \overline{z }^a})=-i\frac \partial
{\partial \overline{z }^a},\,\,a=1,\ldots,m,$$
where $T_{\mathbb{C}}\mathcal{F}=T\mathcal{F}\otimes_{\mathbb{R}}\mathbb{C}$ is the complexified of the tangent distribution $T\mathcal{F}$. 

We also notice that the \textit{Nijenhuis tensor along the leaves} associated to $J_{\mathcal{F}}$, defined by $$N_{\mathcal{F}}(X,Y)=2\{[J_{\mathcal{F}}X, J_{\mathcal{F}}Y]-[X, Y]-J_{\mathcal{F}}[J_{\mathcal{F}}X,Y]-J_{\mathcal{F}}[X,J_{\mathcal{F}}Y]\},$$ 
vanish for every $X,Y\in\Gamma(T_{\mathbb{C}}\mathcal{F})$.

Let $(\mathcal{M}, \mathcal{F})$ and $(\mathcal{M}^{'}, \mathcal{F}^{'})$ be two complex foliations. A \textit{morphism} from $(\mathcal{M}, \mathcal{F})$ to $(\mathcal{M}^{'}, \mathcal{F}^{'})$ is a differentiable mapping $f : \mathcal{M}\rightarrow\mathcal{M}^{'}$ which sends
every leaf $F$ of $\mathcal{F}$ into a leaf $F^{'}$ of $\mathcal{F}^{'}$ such that the restriction map $f:F\rightarrow F^{'}$ is holomorphic. We say that a morphism $f : (\mathcal{M}, \mathcal{F})\rightarrow (\mathcal{M}^{'}, \mathcal{F}^{'})$ is an \textit{isomorphism} of complex foliations (automorphism of $(\mathcal{M}, \mathcal{F})$ if $(\mathcal{M}, \mathcal{F})=(\mathcal{M}^{'}, \mathcal{F}^{'})$) if $f$ is a
diffeomorphism whose restriction to any leaf $F\rightarrow F^{'}$ (where $F^{'} = f(F)$)
is a biholomorphism. For some examples of complex foliations, see for instance \cite{E-K1}.

\subsection{$\overline{\partial}_{\mathcal{F}}$-cohomology}
Let us consider $\Omega^{p,q}(\mathcal{F})$ be the space of foliated differential forms of type $(p,q)$ that is, differential forms on $\mathcal{M}$ which can be written in local coordinates $(z^1,\ldots,$ $z^m, x^1,\ldots, x^n)$, adapted to the foliation by
\begin{equation}
\varphi=\sum\varphi_{A_p\overline{B}_q}(z,x)dz^{A_p}\wedge d\overline{z}^{B_q},
\label{1}
\end{equation}
where $A_p=(a_1\ldots a_p)$, $B_q=(b_1\ldots b_q)$, and the sum is after the indices $a_1\leq\ldots\leq a_p;\,b_1\leq\ldots\leq b_q$. Also, the coefficient functions $\varphi_{a_1\ldots a_p\overline{b}_1\ldots \overline{b}_q}$ are skew symmetric in the indices $(a_1,\ldots,a_p)$ and $(b_1,\ldots,b_q)$, respectively.

The Cauchy-Riemann operators along the leaves, are locally defined by
\begin{equation}
\partial_{\mathcal{F}}\varphi=\sum_{a=1}^m\frac{\partial \varphi_{A_p\overline{B}_q}}{\partial z^a}dz^a\wedge dz^{A_p}\wedge d\overline{z}^{B_q}\,,\,\overline{\partial}_{\mathcal{F}}\varphi=\sum_{a=1}^m\frac{\partial \varphi_{A_p\overline{B}_q}}{\partial\overline{z}^a}d\overline{z}^a\wedge dz^{A_p}\wedge d\overline{z}^{B_q}.
\label{3}
\end{equation}

These operators have the properties $\partial_{\mathcal{F}}^2=\overline{\partial}_{\mathcal{F}}^2=0$ and $\partial_{\mathcal{F}}\overline{\partial}_{\mathcal{F}}+\overline{\partial}_{\mathcal{F}}\partial_{\mathcal{F}}=0$. 

The differential complex 
$$0\longrightarrow\Omega^{p,0}(\mathcal{F})\stackrel{\overline{\partial}_{\mathcal{F}}}{\longrightarrow}\Omega^{p,1}(\mathcal{F})\stackrel{\overline{\partial}_{\mathcal{F}}}{\longrightarrow}\ldots\stackrel{\overline{\partial}_{\mathcal{F}}}{\longrightarrow}\Omega^{p,m}(\mathcal{F})\longrightarrow0$$
is called the $\overline{\partial}_{\mathcal{F}}$\textit{-complex} of $(\mathcal{M},\mathcal{F})$; its cohomology $H^{p,q}(\mathcal{F})$ is called the \textit{foliated Dolbeault cohomology} of the complex foliation $(\mathcal{M},\mathcal{F})$. Locally, the operator $\overline{\partial}_{\mathcal{F}}$ satisfies a Dolbeault-Grothendi\'{e}ck Lemma, (see \cite{E-K1}).

\section{$\overline{\partial}_{\mathcal{F}}$-cohomology attached to a  function}
\setcounter{equation}{0}

In this section, we consider a new $\overline{\partial}_{\mathcal{F}}$-cohomology associated to a function on the foliated manifold $(\mathcal{M}, \mathcal{F})$. This new cohomology is also defined in terms of foliated forms of type $(p,q)$. More precisely, if $(\mathcal{M}, \mathcal{F})$ is a complex foliation and $f$ is a function on $\mathcal{M}$, we define the foliated coboundary operators
\begin{equation}
\partial_{\mathcal{F},f}:\Omega^{p,q}(\mathcal{F})\rightarrow\Omega^{p+1,q}(\mathcal{F})\,\,,\,\,\partial_{\mathcal{F},f}\varphi=f\partial_{\mathcal{F}}\varphi-(p+q)\partial_{\mathcal{F}}f\wedge\varphi,
\label{II1}
\end{equation}
\begin{equation}
\overline{\partial}_{\mathcal{F},f}:\Omega^{p,q}(\mathcal{F})\rightarrow\Omega^{p,q+1}(\mathcal{F})\,\,,\,\,\overline{\partial}_{\mathcal{F},f}\varphi=f\overline{\partial}_{\mathcal{F}}\varphi-(p+q)\overline{\partial}_{\mathcal{F}}f\wedge\varphi.
\label{II2}
\end{equation}
It is easy to check that $\partial^2_{\mathcal{F},f}=\overline{\partial}^2_{\mathcal{F},f}=0$ and $\partial_{\mathcal{F},f}\overline{\partial}_{\mathcal{F},f}+\overline{\partial}_{\mathcal{F},f}\partial_{\mathcal{F},f}=0$. So, we obtain a differential complex
\begin{equation}
0\longrightarrow\Omega^{p,0}(\mathcal{F})\stackrel{\overline{\partial}_{\mathcal{F},f}}{\longrightarrow}\Omega^{p,1}(\mathcal{F})\stackrel{\overline{\partial}_{\mathcal{F},f}}{\longrightarrow}\ldots\stackrel{\overline{\partial}_{\mathcal{F},f}}{\longrightarrow}\Omega^{p,m}(\mathcal{F})\longrightarrow0
\label{II3}
\end{equation}
called the \textit{Dolbeault complex associated to the function $f$} of $\mathcal{F}$; its cohomology $H^{p,q}_f(\mathcal{F})$ is called the \textit{Dolbeault cohomology associated to the function $f$} of complex foliation $\mathcal{F}$.

More generally, for any
integer $k$, we define the coboundary operator
\begin{equation}
\overline{\partial}^k_{\mathcal{F},f}:\Omega^{p,q}(\mathcal{F})\rightarrow\Omega^{p,q+1}(\mathcal{F})\,\,,\,\,\overline{\partial}^k_{\mathcal{F},f}\varphi=f\overline{\partial}_{\mathcal{F}}\varphi-(p+q-k)\overline{\partial}_{\mathcal{F}}f\wedge\varphi.
\label{II4}
\end{equation}
We still have $(\overline{\partial}_{\mathcal{F},f}^k)^2=0$ and we denote by $H^{p,q}_{f,k}(\mathcal{F})$ the cohomology of this complex. We shall restrict
our attention to the cohomology $H^{\bullet,\bullet}_f(\mathcal{F})$ but most results readily generalize to the cohomology $H^{\bullet, \bullet}_{f,k}(\mathcal{F})$.

Using ({\ref{II2}}), by direct calculus we obtain
\begin{proposition}
If $f,g\in\mathcal{F}(\mathcal{M})$ then
\begin{enumerate}
\item[(i)] $\overline{\partial}_{\mathcal{F},f+g}=\overline{\partial}_{\mathcal{F},f}+\overline{\partial}_{\mathcal{F},g}$, $\overline{\partial}_{\mathcal{F},0}=0$, $\overline{\partial}_{\mathcal{F},-f}=-\overline{\partial}_{\mathcal{F},f}$;
\item[(ii)] $\overline{\partial}_{\mathcal{F},fg}=f\overline{\partial}_{\mathcal{F},g}+g\overline{\partial}_{\mathcal{F},f}-fg\overline{\partial}_{\mathcal{F}}$, $\overline{\partial}_{\mathcal{F},1}=\overline{\partial}_{\mathcal{F}}$, $\overline{\partial}_{\mathcal{F}}=\frac{1}{2}(f\overline{\partial}_{\mathcal{F},\frac{1}{f}}+\frac{1}{f}\overline{\partial}_{\mathcal{F},f})$, and
\item[(iii)] $\overline{\partial}_{\mathcal{F},f}(\varphi\wedge\psi)=\overline{\partial}_{\mathcal{F},f}\varphi\wedge\psi+(-1)^{\deg \varphi}\varphi\wedge \overline{\partial}_{\mathcal{F},f}\psi$.
\end{enumerate}
\end{proposition}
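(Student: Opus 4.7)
The plan is to verify all three parts by direct local computation from the definition \eqref{II2}, relying only on the well-known properties of the underlying operator $\overline{\partial}_{\mathcal{F}}$: $\mathbb{R}$-linearity, the ordinary Leibniz rule $\overline{\partial}_{\mathcal{F}}(fg)=f\,\overline{\partial}_{\mathcal{F}}g+g\,\overline{\partial}_{\mathcal{F}}f$ on functions, and the graded Leibniz rule $\overline{\partial}_{\mathcal{F}}(\varphi\wedge\psi)=\overline{\partial}_{\mathcal{F}}\varphi\wedge\psi+(-1)^{\deg\varphi}\varphi\wedge\overline{\partial}_{\mathcal{F}}\psi$ on forms.

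For (i), I would substitute $f+g$ in place of $f$ in \eqref{II2} and use additivity of $f\mapsto\overline{\partial}_{\mathcal{F}}f$ together with distributivity of the wedge product; the identities $\overline{\partial}_{\mathcal{F},0}=0$ and $\overline{\partial}_{\mathcal{F},-f}=-\overline{\partial}_{\mathcal{F},f}$ are then immediate specializations ($g=-f$ and $f=0$).

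For (ii), the product formula follows by plugging $fg$ into \eqref{II2}, expanding $\overline{\partial}_{\mathcal{F}}(fg)$ by the function-Leibniz rule, and regrouping: on a form $\varphi$ of bidegree $(p,q)$ the expression $fg\,\overline{\partial}_{\mathcal{F}}\varphi-(p+q)\bigl(f\,\overline{\partial}_{\mathcal{F}}g+g\,\overline{\partial}_{\mathcal{F}}f\bigr)\wedge\varphi$ splits exactly as $f\,\overline{\partial}_{\mathcal{F},g}\varphi+g\,\overline{\partial}_{\mathcal{F},f}\varphi-fg\,\overline{\partial}_{\mathcal{F}}\varphi$, with the extra $fg\,\overline{\partial}_{\mathcal{F}}\varphi$ term precisely correcting the double count. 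Setting $f=1$ in \eqref{II2} (and using $\overline{\partial}_{\mathcal{F}}1=0$) gives $\overline{\partial}_{\mathcal{F},1}=\overline{\partial}_{\mathcal{F}}$. The formula $\overline{\partial}_{\mathcal{F}}=\tfrac{1}{2}\bigl(f\,\overline{\partial}_{\mathcal{F},1/f}+\tfrac{1}{f}\,\overline{\partial}_{\mathcal{F},f}\bigr)$ then drops out of the first identity by taking $g=1/f$, applying $\overline{\partial}_{\mathcal{F},1}=\overline{\partial}_{\mathcal{F}}$, and solving for $\overline{\partial}_{\mathcal{F}}$.

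For (iii), which I expect to be the main place to be careful because of the sign bookkeeping, I would take $\varphi$ of type $(p,q)$ and $\psi$ of type $(r,s)$, so that $\varphi\wedge\psi$ has total degree $p+q+r+s$. Expanding $\overline{\partial}_{\mathcal{F},f}(\varphi\wedge\psi)$ by \eqref{II2}, applying the graded Leibniz rule to $\overline{\partial}_{\mathcal{F}}(\varphi\wedge\psi)$, and splitting the coefficient $-(p+q+r+s)=-(p+q)-(r+s)$, one obtains four terms. The decisive sign manipulation is $\varphi\wedge\overline{\partial}_{\mathcal{F}}f\wedge\psi=(-1)^{p+q}\overline{\partial}_{\mathcal{F}}f\wedge\varphi\wedge\psi$, since $\overline{\partial}_{\mathcal{F}}f$ has degree one; combined with the $(-1)^{p+q}$ from the Leibniz rule this lets one regroup the four terms as $\overline{\partial}_{\mathcal{F},f}\varphi\wedge\psi+(-1)^{\deg\varphi}\varphi\wedge\overline{\partial}_{\mathcal{F},f}\psi$. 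Rather than invoking abstract graded-derivation arguments, I would write the four terms out explicitly to eliminate any risk of a sign slip.
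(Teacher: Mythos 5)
Your proposal is correct and is exactly the argument the paper intends: the paper offers no written proof beyond the remark that the proposition follows from \eqref{II2} ``by direct calculus,'' and your direct expansion of the definition (including the key sign identity $\varphi\wedge\overline{\partial}_{\mathcal{F}}f=(-1)^{\deg\varphi}\,\overline{\partial}_{\mathcal{F}}f\wedge\varphi$ in part (iii) and the substitution $g=1/f$ in the product formula for the last identity of part (ii)) is precisely that calculation, carried out correctly.
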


\subsection{Bott-Chern cohomology}
\begin{definition}
The differential complex
\begin{equation}
\ldots\Omega^{p-1,q-1}(\mathcal{F})\stackrel{\partial_{\mathcal{F},f}\overline{\partial}_{\mathcal{F},f}}{\longrightarrow}\Omega^{p,q}(\mathcal{F})\stackrel{\partial_{\mathcal{F},f}\oplus\overline{\partial}_{\mathcal{F},f}}{\longrightarrow}\Omega^{p+1,q}(\mathcal{F})\oplus\Omega^{p,q+1}(\mathcal{F})\ldots
\label{II5}
\end{equation}
is called the \textit{Bott-Chern complex associated to the function $f$} of $\mathcal{F}$ and the corresponding Bott-Chern cohomology groups are given by
\begin{equation}
H^{p,q}_{f,BC}(\mathcal{F})=\frac{\ker\{\partial_{\mathcal{F},f}:\Omega^{p,q}\rightarrow\Omega^{p+1,q}\}\cap\ker\{\overline{\partial}_{\mathcal{F},f}:\Omega^{p,q}\rightarrow\Omega^{p,q+1}\}}{{\rm im} \{\partial_{\mathcal{F},f}\overline{\partial}_{\mathcal{F},f}:\Omega^{p-1,q-1}\rightarrow\Omega^{p,q}\}}.
\label{II6}
\end{equation}
\end{definition}

It is easy to see that $\bigoplus_{p,q}H^{p,q}_{f,BC}(\mathcal{F})$ inherits a bigraded algebra structure induced by the exterior product of these forms. The above definition imply the canonical map
\begin{equation}
H^{p,q}_{f,BC}(\mathcal{F})\rightarrow H^{p,q}_f(\mathcal{F}).
\label{II7}
\end{equation}

Now, let us consider the dual of the Bott-Chern cohomology groups associated to the function $f$, given by
$$H^{p,q}_{f,A}(\mathcal{F})=\frac{\ker\{\partial_{\mathcal{F},f}\overline{\partial}_{\mathcal{F},f}:\Omega^{p,q}\rightarrow\Omega^{p+1,q+1}\}}{{\rm im}\{\partial_{\mathcal{F},f}:\Omega^{p-1,q}\rightarrow\Omega^{p,q}\}+{\rm im}\{\overline{\partial}_{\mathcal{F},f}:\Omega^{p,q-1}\rightarrow\Omega^{p,q}\}}$$
called \textit{the Aeppli cohomology groups asociated to the function $f$} of complex foliation $\mathcal{F}$.
\begin{proposition}
The exterior product induces a bilinear map
\begin{equation}
\wedge:H^{p,q}_{f,BC}(\mathcal{F})\times H_{f,A}^{r,s}(\mathcal{F})\rightarrow H^{p+r,q+s}_{f,A}(\mathcal{F}).
\label{II8}
\end{equation}
\end{proposition}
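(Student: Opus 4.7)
The plan is to define the pairing on representatives by $[\alpha]\wedge[\beta]:=[\alpha\wedge\beta]$ and to verify three things: that $\alpha\wedge\beta$ lies in $\ker(\partial_{\mathcal{F},f}\overline{\partial}_{\mathcal{F},f})$ so that it represents an Aeppli class; that the resulting class in $H^{p+r,q+s}_{f,A}(\mathcal{F})$ is independent of the choice of representatives; and bilinearity, which is immediate from bilinearity of the exterior product. The workhorse throughout is the graded Leibniz rule for $\overline{\partial}_{\mathcal{F},f}$ of Proposition 2.1(iii), together with the entirely analogous Leibniz rule for $\partial_{\mathcal{F},f}$ that follows by the same computation applied to (\ref{II1}).

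For the cocycle condition, take $\alpha\in\ker\partial_{\mathcal{F},f}\cap\ker\overline{\partial}_{\mathcal{F},f}$ of bidegree $(p,q)$ and $\beta\in\ker\partial_{\mathcal{F},f}\overline{\partial}_{\mathcal{F},f}$ of bidegree $(r,s)$. Applying Leibniz, the $\alpha$-derivative term drops out of $\overline{\partial}_{\mathcal{F},f}(\alpha\wedge\beta)$, leaving $\pm\,\alpha\wedge\overline{\partial}_{\mathcal{F},f}\beta$; applying $\partial_{\mathcal{F},f}$ once more and expanding via Leibniz kills the remaining cross term because $\partial_{\mathcal{F},f}\alpha=0$, and we are left with $\pm\,\alpha\wedge\partial_{\mathcal{F},f}\overline{\partial}_{\mathcal{F},f}\beta=0$.

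Independence under modifications of $\beta$ is easy: replacing $\beta$ by $\beta+\partial_{\mathcal{F},f}\gamma_{1}+\overline{\partial}_{\mathcal{F},f}\gamma_{2}$ and using again $\partial_{\mathcal{F},f}\alpha=\overline{\partial}_{\mathcal{F},f}\alpha=0$, Leibniz gives $\alpha\wedge\partial_{\mathcal{F},f}\gamma_{1}=\pm\,\partial_{\mathcal{F},f}(\alpha\wedge\gamma_{1})$ and similarly for the $\overline{\partial}_{\mathcal{F},f}\gamma_{2}$ term, so the change lies in $\operatorname{im}\partial_{\mathcal{F},f}+\operatorname{im}\overline{\partial}_{\mathcal{F},f}$.

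The subtle step, which I expect to be the main obstacle, is independence under changes of $\alpha$: modifying $\alpha$ by $\partial_{\mathcal{F},f}\overline{\partial}_{\mathcal{F},f}\gamma$ alters the product by $(\partial_{\mathcal{F},f}\overline{\partial}_{\mathcal{F},f}\gamma)\wedge\beta$, which is not \emph{a priori} a coboundary because $\beta$ need not be closed for either $\partial_{\mathcal{F},f}$ or $\overline{\partial}_{\mathcal{F},f}$ separately. The resolution is to expand via Leibniz twice and produce an identity of the shape
\[
(\partial_{\mathcal{F},f}\overline{\partial}_{\mathcal{F},f}\gamma)\wedge\beta=\partial_{\mathcal{F},f}\bigl(\overline{\partial}_{\mathcal{F},f}\gamma\wedge\beta\bigr)\pm\overline{\partial}_{\mathcal{F},f}\bigl(\gamma\wedge\partial_{\mathcal{F},f}\beta\bigr),
\]
where the crucial cancellation uses the anticommutation $\partial_{\mathcal{F},f}\overline{\partial}_{\mathcal{F},f}+\overline{\partial}_{\mathcal{F},f}\partial_{\mathcal{F},f}=0$ together with the hypothesis $\partial_{\mathcal{F},f}\overline{\partial}_{\mathcal{F},f}\beta=0$, so that $\partial_{\mathcal{F},f}\beta$ is itself $\overline{\partial}_{\mathcal{F},f}$-closed; this is exactly what makes $\overline{\partial}_{\mathcal{F},f}(\gamma\wedge\partial_{\mathcal{F},f}\beta)$ reduce to the cross term generated by expanding $\partial_{\mathcal{F},f}(\overline{\partial}_{\mathcal{F},f}\gamma\wedge\beta)$. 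Once this identity is in hand the change lies in $\operatorname{im}\partial_{\mathcal{F},f}+\operatorname{im}\overline{\partial}_{\mathcal{F},f}$, and the pairing descends to the claimed bilinear map on cohomology.
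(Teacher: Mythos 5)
Your proof is correct and follows essentially the same route as the paper: both arguments verify that the wedge of a form in $\ker\partial_{\mathcal{F},f}\cap\ker\overline{\partial}_{\mathcal{F},f}$ with a $\partial_{\mathcal{F},f}\overline{\partial}_{\mathcal{F},f}$-closed form is again $\partial_{\mathcal{F},f}\overline{\partial}_{\mathcal{F},f}$-closed, and then check well-definedness in each argument by means of the graded Leibniz rule. The only (cosmetic) divergence is in the delicate step of changing the Bott--Chern representative by $\partial_{\mathcal{F},f}\overline{\partial}_{\mathcal{F},f}\gamma$: you exhibit the asymmetric primitive $\partial_{\mathcal{F},f}(\overline{\partial}_{\mathcal{F},f}\gamma\wedge\beta)+(-1)^{\deg\gamma}\overline{\partial}_{\mathcal{F},f}(\gamma\wedge\partial_{\mathcal{F},f}\beta)$, whereas the paper writes a symmetrized $(\partial_{\mathcal{F},f}+\overline{\partial}_{\mathcal{F},f})$-primitive; both rest on the same facts (anticommutation plus $\partial_{\mathcal{F},f}\overline{\partial}_{\mathcal{F},f}\beta=0$, so that $\overline{\partial}_{\mathcal{F},f}\partial_{\mathcal{F},f}\beta=0$) and land the change in $\operatorname{im}\partial_{\mathcal{F},f}+\operatorname{im}\overline{\partial}_{\mathcal{F},f}$ as required.
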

\begin{proof}
Let $\varphi,\psi\in\Omega^{p,q}(\mathcal{F})$. If $\varphi$ is $(\partial_{\mathcal{F},f}+\overline{\partial}_{\mathcal{F},f})$-closed and $\psi$ is $\partial_{\mathcal{F},f}\overline{\partial}_{\mathcal{F},f}$-closed then $\varphi\wedge\psi$ is $\partial_{\mathcal{F},f}\overline{\partial}_{\mathcal{F},f}$-closed. Also, if $\varphi$ is $(\partial_{\mathcal{F},f}+\overline{\partial}_{\mathcal{F},f})$-closed and $\psi$ is $(\partial_{\mathcal{F},f}+\overline{\partial}_{\mathcal{F},f})$-exact then $\varphi\wedge\psi$ is $(\partial_{\mathcal{F},f}+\overline{\partial}_{\mathcal{F},f})$-exact and if $\varphi$ is $\partial_{\mathcal{F},f}\overline{\partial}_{\mathcal{F},f}$-exact and $\psi$ is $\partial_{\mathcal{F},f}\overline{\partial}_{\mathcal{F},f}$-closed then $\varphi\wedge\psi$ is $(\partial_{\mathcal{F},f}+\overline{\partial}_{\mathcal{F},f})$-exact. 

For the last assertion, we have
\begin{eqnarray*}
\varphi\wedge\psi &=& \partial_{\mathcal{F},f}\overline{\partial}_{\mathcal{F},f}\theta\wedge\psi\\
&=& \frac{1}{2}(\partial_{\mathcal{F},f}+\overline{\partial}_{\mathcal{F},f})[(\overline{\partial}_{\mathcal{F},f}-\partial_{\mathcal{F},f})\theta\wedge\psi+(-1)^{p+q}\theta\wedge(\partial_{\mathcal{F},f}-\overline{\partial}_{\mathcal{F},f})\psi].
\end{eqnarray*}
\end{proof}
In particular, we have
$$H^{p,q}_{f,BC}(\mathcal{F})\times H^{n-p,n-q}_{f,A}(\mathcal{F})\rightarrow H^{n,n}_{f,A}(\mathcal{F}).$$

\subsection{Dependence on the function}
A natural question to ask about the cohomology $H^{p,q}_f(\mathcal{F})$ is how it depends on the function $f$. Similar with the proposition 3.2. from \cite{Mo2},  we explain this fact for our foliated cohomology. We have
\begin{proposition}
If $h\in\mathcal{F}(\mathcal{M})$ does not vanish, then the cohomologies $H^{\bullet, \bullet}_f(\mathcal{F})$ and $H^{\bullet, \bullet}_{fh}(\mathcal{F})$ are isomorphic.
\end{proposition}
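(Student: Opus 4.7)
The plan is to exhibit an explicit isomorphism of complexes $(\Omega^{\bullet,\bullet}(\mathcal{F}), \overline{\partial}_{\mathcal{F},f})$ and $(\Omega^{\bullet,\bullet}(\mathcal{F}), \overline{\partial}_{\mathcal{F},fh})$ obtained by multiplying a $(p,q)$-form by an appropriate power of $h$. Since $h$ is nowhere zero, such a multiplication is a bijection in each bidegree, so once it is shown to intertwine the two coboundaries the induced map on cohomology is automatically an isomorphism.

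Concretely, for each bidegree $(p,q)$ I would define
\begin{equation*}
\Phi_{p,q}:\Omega^{p,q}(\mathcal{F})\longrightarrow\Omega^{p,q}(\mathcal{F}),\qquad \Phi_{p,q}(\varphi)=h^{p+q}\varphi.
\end{equation*}
The exponent $p+q$ is forced: if one writes $\Phi_{p,q}(\varphi)=h^{s}\varphi$ for an arbitrary integer $s$ and applies the Leibniz rule (Proposition (iii)) together with definition (\ref{II2}), one obtains
\begin{equation*}
\overline{\partial}_{\mathcal{F},fh}(h^{s}\varphi)=h^{s+1}\overline{\partial}_{\mathcal{F},f}\varphi+\bigl[s-(p+q)\bigr]\,f h^{s}\,\overline{\partial}_{\mathcal{F}}h\wedge\varphi,
\end{equation*}
so the obstruction term vanishes precisely when $s=p+q$. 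With this choice one gets the chain-map identity
\begin{equation*}
\overline{\partial}_{\mathcal{F},fh}\circ\Phi_{p,q}=\Phi_{p,q+1}\circ\overline{\partial}_{\mathcal{F},f}.
\end{equation*}

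Because $h$ does not vanish, the function $h^{p+q}$ is invertible in $\mathcal{F}(\mathcal{M})$ with inverse $h^{-(p+q)}$; consequently each $\Phi_{p,q}$ is an $\mathbb{R}$-linear bijection, whose inverse is the analogous multiplication map associated to $h^{-1}$ (it can also be verified directly that the inverse map is a chain map between the two complexes with the roles of $fh$ and $f$ swapped). Passing to cohomology yields the desired isomorphism $H^{p,q}_{f}(\mathcal{F})\cong H^{p,q}_{fh}(\mathcal{F})$ for every bidegree.

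The only nontrivial step is the Leibniz computation that isolates the correct exponent $s=p+q$; the rest is formal. Note that the same argument, with $(p+q)$ replaced by $(p+q-k)$, proves the analogous statement for the operators $\overline{\partial}_{\mathcal{F},f}^{k}$ of (\ref{II4}), giving $H^{p,q}_{f,k}(\mathcal{F})\cong H^{p,q}_{fh,k}(\mathcal{F})$.
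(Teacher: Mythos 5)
Your proof is correct and is essentially the paper's argument: the paper uses the map $\varphi\mapsto\varphi/h^{p+q}$ intertwining $\overline{\partial}_{\mathcal{F},fh}$ with $\overline{\partial}_{\mathcal{F},f}$, and your $\Phi_{p,q}(\varphi)=h^{p+q}\varphi$ is exactly its inverse, verified by the same Leibniz computation. The explicit isolation of the obstruction term $[s-(p+q)]fh^{s}\overline{\partial}_{\mathcal{F}}h\wedge\varphi$ is a nice touch that the paper leaves to the reader.
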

\begin{proof}
For each $p,q\in\mathbb{N}$, consider the linear isomorphism
\begin{equation}
\phi^{p,q}:\Omega^{p,q}(\mathcal{F})\rightarrow\Omega^{p,q}(\mathcal{F})\,\,,\,\,\phi^{p,q}(\varphi)=\frac{\varphi}{h^{p+q}}.
\label{II9}
\end{equation} 
If $\varphi$ is a foliated $(p,q)$-form on $(\mathcal{M}, \mathcal{F})$, one checks easily that
\begin{equation}
\phi^{p,q+1}(\overline{\partial}_{\mathcal{F},fh}\varphi)=\overline{\partial}_{\mathcal{F},f}(\phi^{p,q}(\varphi)),
\label{II10}
\end{equation}
so $\phi$ induces an isomorphism between the cohomologies $H^{\bullet, \bullet}_f(\mathcal{F})$ and $H^{\bullet, \bullet}_{fh}(\mathcal{F})$.
\end{proof}

\begin{corollary}
If the foliated function $f$ does not vanish, then $H^{\bullet, \bullet}_f(\mathcal{F})$ is isomorphic to the foliated Dolbeault cohomology $H^{\bullet, \bullet}(\mathcal{F})$.
\end{corollary}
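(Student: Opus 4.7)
The plan is to derive this immediately from the preceding Proposition together with part (ii) of the earlier proposition (the one listing identities satisfied by $\overline{\partial}_{\mathcal{F},f}$). Since $f$ is nowhere vanishing, the function $h := 1/f$ is a well-defined foliated function on $\mathcal{M}$ which also does not vanish, so the Proposition applies to this choice of $h$ and yields an isomorphism
\[
H^{\bullet,\bullet}_f(\mathcal{F}) \;\cong\; H^{\bullet,\bullet}_{fh}(\mathcal{F}) \;=\; H^{\bullet,\bullet}_{1}(\mathcal{F}).
\]

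Next I would invoke the identity $\overline{\partial}_{\mathcal{F},1} = \overline{\partial}_{\mathcal{F}}$ from part (ii) of the earlier proposition. This identifies the Dolbeault complex associated to the constant function $1$ with the ordinary $\overline{\partial}_{\mathcal{F}}$-complex of $(\mathcal{M},\mathcal{F})$ term by term, and hence identifies their cohomologies: $H^{\bullet,\bullet}_{1}(\mathcal{F}) = H^{\bullet,\bullet}(\mathcal{F})$. Composing the two identifications gives the desired isomorphism.

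Concretely, unwinding the isomorphism from the proof of the Proposition, the explicit map $\Omega^{p,q}(\mathcal{F}) \to \Omega^{p,q}(\mathcal{F})$ sending $\varphi \mapsto \varphi/f^{p+q}$ intertwines $\overline{\partial}_{\mathcal{F},f}$ with $\overline{\partial}_{\mathcal{F}}$; one can verify this directly by computing
\[
\overline{\partial}_{\mathcal{F}}\!\left(\frac{\varphi}{f^{p+q}}\right) = \frac{1}{f^{p+q+1}}\bigl(f\,\overline{\partial}_{\mathcal{F}}\varphi - (p+q)\,\overline{\partial}_{\mathcal{F}}f\wedge\varphi\bigr) = \frac{\overline{\partial}_{\mathcal{F},f}\varphi}{f^{p+q+1}},
\]
which matches (\ref{II10}) with $h=1/f$, confirming the isomorphism at the cochain level. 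Since everything reduces to the already-established Proposition and a normalization identity, there is essentially no obstacle to overcome; the only point worth flagging is that one must assume $f$ is nowhere zero on all of $\mathcal{M}$ so that $1/f$ genuinely lies in $\mathcal{F}(\mathcal{M})$.
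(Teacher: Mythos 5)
Your argument is correct and is exactly the paper's proof: the author likewise takes $h=1/f$ in the preceding proposition and uses $\overline{\partial}_{\mathcal{F},1}=\overline{\partial}_{\mathcal{F}}$. Your explicit cochain-level verification of the intertwining identity is a welcome (and correct) elaboration, but it adds nothing beyond what the proposition already guarantees.
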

\begin{proof}
We take $h=\frac{1}{f}$ in the above proposition.
\end{proof}

\subsection{A Mayer-Vietoris sequence and a homotopy morphism}

Since the differential $\overline{\partial}_{\mathcal{F},f}$ commutes with the restrictions to open subsets, one can construct, in the same way as for the de Rham cohomology (see \cite{B-T}), a Mayer-Vietoris exact sequence, namely:

\begin{theorem}
If $\mathcal{U}=\{U, V\}$ is an open cover of $\mathcal{M}$, we have the long exact sequence

$\ldots\rightarrow H^{p,q-1}_f(\mathcal{F}|_{U\cap V})\rightarrow H^{p,q}_f(\mathcal{F})\stackrel{A}{\rightarrow}H^{p,q}_f(\mathcal{F}|_U)\oplus H^{p,q}_f(\mathcal{F}|_V)\stackrel{B}{\rightarrow}$

$\stackrel{B}{\rightarrow}H^{p,q}_f(\mathcal{F}|_{U\cap V})\rightarrow\ldots,$ 

where for $[\varphi]\in H^{p,q}_f(\mathcal{F})$ and $([\sigma_U],[\tau_V])\in H^{p,q}_f(\mathcal{F}|_U)\oplus H^{p,q}_f\mathcal{F}|_V)$, we define 
\begin{displaymath}
A([\varphi])=([\sigma_U],[\tau_V])\,\,\,and\,\,\,B([\sigma_U],[\tau_V])=[\sigma|_{U\cap V}-\tau|_{U\cap V}].
\end{displaymath}
\end{theorem}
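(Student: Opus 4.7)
The plan is to realize the desired long exact sequence as the cohomology long exact sequence of a short exact sequence of chain complexes, following the de Rham template of \cite{B-T}. At the level of forms, define the maps
$$A_0(\varphi)=(\varphi|_U,\varphi|_V),\qquad B_0(\sigma,\tau)=\sigma|_{U\cap V}-\tau|_{U\cap V}.$$
Both commute with $\overline{\partial}_{\mathcal{F},f}$ because each ingredient of $\overline{\partial}_{\mathcal{F},f}\varphi=f\,\overline{\partial}_{\mathcal{F}}\varphi-(p+q)\overline{\partial}_{\mathcal{F}}f\wedge\varphi$---namely $\overline{\partial}_{\mathcal{F}}$, pointwise multiplication by $f$, and wedge with $\overline{\partial}_{\mathcal{F}}f$---is a local operation compatible with restriction to open subsets.

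The core task is to check that the candidate sequence
$$0\longrightarrow \Omega^{p,q}(\mathcal{F})\stackrel{A_0}{\longrightarrow}\Omega^{p,q}(\mathcal{F}|_U)\oplus\Omega^{p,q}(\mathcal{F}|_V)\stackrel{B_0}{\longrightarrow}\Omega^{p,q}(\mathcal{F}|_{U\cap V})\longrightarrow 0$$
is exact in each bidegree. Injectivity of $A_0$ is obvious since $U\cup V=\mathcal{M}$, and $\ker B_0=\mathrm{im}\,A_0$ is the standard sheaf gluing: two foliated forms agreeing on the overlap patch to a global form. Only surjectivity of $B_0$ needs a genuine argument. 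For this I would fix a smooth partition of unity $\{\rho_U,\rho_V\}$ subordinate to $\{U,V\}$ and, given $\omega\in\Omega^{p,q}(\mathcal{F}|_{U\cap V})$, extend $\rho_V\omega$ by zero to a form $\sigma\in\Omega^{p,q}(\mathcal{F}|_U)$ and $-\rho_U\omega$ by zero to a form $\tau\in\Omega^{p,q}(\mathcal{F}|_V)$; on the overlap one has $\sigma-\tau=(\rho_V+\rho_U)\omega=\omega$, hence $B_0(\sigma,\tau)=\omega$.

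With the short exact sequence of $\overline{\partial}_{\mathcal{F},f}$-complexes in hand, the zig-zag lemma immediately yields the long exact sequence of the theorem; the connecting homomorphism $\delta:H^{p,q}_f(\mathcal{F}|_{U\cap V})\to H^{p,q+1}_f(\mathcal{F})$ is the usual diagram-chase map built from the partition-of-unity lifts constructed above. I do not anticipate a genuine obstacle: the only mild subtlety is verifying that these extensions behave well under the twisted differential $\overline{\partial}_{\mathcal{F},f}$ rather than the classical $\overline{\partial}_{\mathcal{F}}$, but this is automatic from the locality observation in the first paragraph. Everything else is formal homological algebra.
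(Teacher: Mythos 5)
Your proof is correct and takes the same route the paper intends: the paper offers no details beyond observing that $\overline{\partial}_{\mathcal{F},f}$ commutes with restriction to open sets and pointing to the de Rham construction in \cite{B-T}, and your write-up supplies exactly that standard argument (short exact sequence of complexes, partition-of-unity surjectivity, zig-zag lemma). The only point worth noting is that your locality observation is precisely the one-line justification the paper itself gives, so there is no divergence in method.
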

Following \cite{Mo2}, we give
\begin{definition}
Let $(\mathcal{M}, \mathcal{F})$ and $(\mathcal{M}^{'}, \mathcal{F}^{'})$ two complex foliations and $f\in\mathcal{F}(\mathcal{M})$ and $f^{'}\in\mathcal{F}(\mathcal{M}^{'})$.
A \textit{morphism} from the pair $(\mathcal{M}, \mathcal{F}, f)$ to the pair $(\mathcal{M}^{'}, \mathcal{F}^{'}, f^{'})$ is a pair $(\phi, \alpha)$ formed by a morphism $\phi:(\mathcal{M}, \mathcal{F})\rightarrow(\mathcal{M}^{'}, \mathcal{F}^{'})$ and a real valued function $\alpha:\mathcal{M}\rightarrow \mathbb{R}$, such that $\alpha$ does not vanish on $\mathcal{M}$ and $f^{'}\circ \phi=\alpha f$. 
\end{definition}
If $(\phi, \alpha)$ is a morphism from the pair $(\mathcal{M}, \mathcal{F}, f)$ to the pair $(\mathcal{M}^{'}, \mathcal{F}^{'}, f^{'})$ then the map $\Omega^{p,q}(\mathcal{F}^{'})\mapsto\Omega^{p,q}(\mathcal{F})$ defined by $\varphi\mapsto\frac{\phi^*\varphi}{\alpha^{p+q}}$ induces an homomorphism in cohomology $H^{p,q}_{f^{'}}(\mathcal{F}^{'})\mapsto H^{p,q}_{f}(\mathcal{F})$. We also notice that if $\phi$ is diffeomorphism then $H^{p,q}_{f^{'}}(\mathcal{F}^{'})$ and $H^{p,q}_{f}(\mathcal{F})$ are isomorphic.

\subsection{A relative cohomology}
The relative de Rham cohomology was first defined in \cite{B-T} p. 78. In this subsection we construct a similar version for our foliated cohomology.

Let $\mu : \mathcal{M}\rightarrow\mathcal{M}^{'}$ be a morphism of two complex foliations. If $F$ is a leaf of $(\mathcal{M},\mathcal{F})$ then under inclusion $F\stackrel{i}{\hookrightarrow}\mathcal{M}$ we have $\overline{\partial}_{\mathcal{F}}=i^*\overline{\partial}$. We also notice that the following diagram 
$$
\begin{CD}
F  @>i>> \mathcal{M}\\
@ V\mu VV   @VV\mu V\\
F^{'}  @>i^{'}>> \mathcal{M}^{'}
\end{CD}
$$
is commutative. Then by above discussion one gets
\begin{equation}
\overline{\partial}_{\mathcal{F}}\mu^*=\mu^*\overline{\partial}^{'}_{\mathcal{F}^{'}}.
\label{II11}
\end{equation}
Indeed, for $\varphi\in\Omega^{p,q}(\mathcal{F}^{'})$ we have
\begin{eqnarray*}
\overline{\partial}_{\mathcal{F}}(\mu^*\varphi) &=&i^*\overline{\partial}(\mu^*\varphi)=i^*\mu^*(\overline{\partial}^{'}\varphi)=(\mu i)^*(\overline{\partial}^{'}\varphi)\\
&=&(i^{'}\mu)^*(\overline{\partial}^{'}\varphi)=\mu^*i^{'*}\overline{\partial}^{'}\varphi=\mu^*\overline{\partial}^{'}_{\mathcal{F}^{'}},
\end{eqnarray*}
where $\overline{\partial}^{'}$ denotes the corresponding operator on $\mathcal{M}^{'}$.
 
Now, if $f^{'}\in\mathcal{F}(\mathcal{M}^{'})$ then by ({\ref{II11}}) one gets
\begin{equation}
\overline{\partial}_{\mathcal{F},\mu^*f^{'}}\mu^*=\mu^*\overline{\partial}_{\mathcal{F}^{'},f^{'}}^{'}.
\label{II12}
\end{equation}
Indeed, for $\varphi\in\Omega^{p,q}(\mathcal{F}^{'})$ by direct calculus we have
\begin{eqnarray*}
\overline{\partial}_{\mathcal{F},\mu^*f^{'}}(\mu^*\varphi) &=&\mu^*f^{'} \overline{\partial}_{\mathcal{F}}(\mu^*\varphi)-(p+q)\overline{\partial}_{\mathcal{F}}(\mu^*f^{'})\wedge\mu^*\varphi\\
&=&\mu^*f^{'}\mu^*(\overline{\partial}^{'}_{\mathcal{F}^{'}}\varphi)-(p+q)\mu^*(\overline{\partial}^{'}_{\mathcal{F}^{'}}f^{'})\wedge\mu^*\varphi\\
&=&\mu^*(f^{'}\overline{\partial}^{'}_{\mathcal{F}^{'}}\varphi)-\mu^*((p+q)\overline{\partial}^{'}_{\mathcal{F}^{'}}f^{'}\wedge\varphi)\\
&=& \mu^*(\overline{\partial}^{'}_{\mathcal{F}^{'},f^{'}}\varphi).
\end{eqnarray*}
We define the differential complex
\begin{displaymath}
\ldots\stackrel{\widetilde{\overline{\partial}}_{\mathcal{F}}}{\longrightarrow}\Omega^{p,q}_{\mathcal{F}}(\mu)\stackrel{\widetilde{\overline{\partial}}_{\mathcal{F}}}{\longrightarrow}\Omega^{p,q+1}_{\mathcal{F}}(\mu)\stackrel{\widetilde{\overline{\partial}}_{\mathcal{F}}}{\longrightarrow}\ldots
\end{displaymath}
where
\begin{displaymath}
\Omega^{p,q}_{\mathcal{F}}(\mu)=\Omega^{p,q}(\mathcal{F}^{'})\oplus\Omega^{p,q-1}(\mathcal{F})\,\,{\rm and}\,\,\,\widetilde{\overline{\partial}}_{\mathcal{F}}(\varphi, \psi)=(\overline{\partial}_{\mathcal{F}^{'},f^{'}}^{'}\varphi, \mu^*\varphi-\overline{\partial}_{\mathcal{F},\mu^*f^{'}}\psi).
\end{displaymath}
Taking into account $\overline{\partial}_{\mathcal{F}^{'},f^{'}}^{'2}=\overline{\partial}_{\mathcal{F},\mu^*f^{'}}^2=0$ and ({\ref{II12}}) we easily verify that $\widetilde{\overline{\partial}}_{\mathcal{F}}^2=0$. Denote the cohomology groups of this
complex by $H^{p,q}_{f^{'}}(\mu)$. 

If we regraduate the complex $\Omega^{p,q}(\mathcal{F})$ as $\widetilde{\Omega}^{p,q}(\mathcal{F})=\Omega^{p,q-1}(\mathcal{F})$, then we obtain an exact sequence of
differential complexes
\begin{equation}
0\longrightarrow\widetilde{\Omega}^{p,q}(\mathcal{F})\stackrel{\alpha}{\longrightarrow}\Omega^{p,q}_{\mathcal{F}}(\mu)\stackrel{\beta}{\longrightarrow}\Omega^{p,q}(\mathcal{F}^{'})\longrightarrow0
\label{II13}
\end{equation}
with the obvious mappings $\alpha$ and $\beta$ given by $\alpha(\psi)=(0, \psi)$ and $\beta(\varphi, \psi)=\varphi$, respectively. From ({\ref{II13}})
we have an exact sequence in cohomologies,
\begin{displaymath}
\ldots\longrightarrow H^{p,q-1}_{\mu^*f^{'}}(\mathcal{F})\stackrel{\alpha^*}{\longrightarrow}H^{p,q}_{f^{'}}(\mu)\stackrel{\beta^*}{\longrightarrow}H^{p,q}_{f^{'}}(\mathcal{F}^{'})\stackrel{\delta^*}{\longrightarrow}H^{p,q}_{\mu^*f^{'}}(\mathcal{F})\longrightarrow\ldots .
\end{displaymath}
It is easily seen that $\delta^*=\mu^*$. Here $\mu^*$ denotes the corresponding map between cohomology groups. Let $\varphi\in\Omega^{p,q}(\mathcal{F}^{'})$ be a $\overline{\partial}^{'}_{\mathcal{F}^{'},f^{'}}$-closed form, and $(\varphi, \psi)\in \Omega^{p,q}_{\mathcal{F}}(\mu)$. Then $\widetilde{\overline{\partial}}_{\mathcal{F}}(\varphi, \psi)=(0, \mu^*\varphi-\overline{\partial}_{\mathcal{F},\mu^*f^{'}}\psi)$ and by the definition of the operator $\delta^*$ we have
\begin{displaymath}
\delta^*[\varphi]=[\mu^*\varphi-\overline{\partial}_{\mathcal{F},\mu^*f^{'}}\psi]=[\mu^*\varphi].
\end{displaymath}
Hence we finally get a long exact sequence
\begin{equation}
\ldots\longrightarrow H^{p,q-1}_{\mu^*f^{'}}(\mathcal{F})\stackrel{\alpha^*}{\longrightarrow}H^{p,q}_{f^{'}}(\mu)\stackrel{\beta^*}{\longrightarrow}H^{p,q}_{f^{'}}(\mathcal{F}^{'})\stackrel{\mu^*}{\longrightarrow}H^{p,q}_{\mu^*f^{'}}(\mathcal{F})\stackrel{\alpha^*}{\longrightarrow}\ldots .
\label{II14}
\end{equation}
We have
\begin{corollary}
If the complex foliations $(\mathcal{M}, \mathcal{F})$ and $(\mathcal{M}^{'}, \mathcal{F}^{'})$ are of the $(n+2m)$-th and $(n^{'}+2m^{'})$-th dimension, respectively, then
\begin{enumerate}
\item[(i)] $\beta^*:H^{p,m+1}_{f^{'}}(\mu)\rightarrow H^{p,m+1}_{f^{'}}(\mathcal{F}^{'})$ is an epimorphism,
\item[(ii)] $\alpha^*:H^{p,m^{'}}_{\mu^*f^{'}}(\mathcal{F})\rightarrow H^{p,m^{'}+1}_{f^{'}}(\mu)$  is an epimorphism, 
\item[(iii)] $\beta^*:H^{p,q}_{f^{'}}(\mu)\rightarrow H^{p,q}_{f^{'}}(\mathcal{F}^{'})$ is an isomorphism for $q>m+1$,
\item[(iv)] $\alpha^*:H^{p,q}_{\mu^*f^{'}}(\mathcal{F})\rightarrow H^{p,q+1}_{f^{'}}(\mu)$ is an isomorphism for $q>m^{'}$,
\item[(v)] $H^{p,q}_{f^{'}}(\mu)=0$ for $q> {\rm max}\{m+1,m^{'}\}$.
\end{enumerate}
\end{corollary}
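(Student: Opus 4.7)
The plan is to derive all five assertions from the long exact sequence (\ref{II14}) by invoking a single chain-level vanishing: $\Omega^{p,q}(\mathcal{F})=0$ whenever $q>m$ and $\Omega^{p,q}(\mathcal{F}')=0$ whenever $q>m'$, simply because the leaves have complex dimension $m$ and $m'$ respectively. Passing to cohomology this forces $H^{p,q}_{\mu^*f'}(\mathcal{F})=0$ for $q>m$ and $H^{p,q}_{f'}(\mathcal{F}')=0$ for $q>m'$, independently of the twisting by $f'$ or $\mu^*f'$. With these vanishings in hand, each of (i)--(v) is read off exactness at a position in (\ref{II14}) where one or both flanking groups are forced to be zero.

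Concretely, for (i) I examine the segment $H^{p,m+1}_{f'}(\mu)\stackrel{\beta^*}{\longrightarrow}H^{p,m+1}_{f'}(\mathcal{F}')\stackrel{\mu^*}{\longrightarrow}H^{p,m+1}_{\mu^*f'}(\mathcal{F})$: since $m+1>m$ the rightmost term vanishes, hence $\mu^*=0$ and exactness forces $\beta^*$ to be surjective. For (ii) I look at $H^{p,m'}_{\mu^*f'}(\mathcal{F})\stackrel{\alpha^*}{\longrightarrow}H^{p,m'+1}_{f'}(\mu)\stackrel{\beta^*}{\longrightarrow}H^{p,m'+1}_{f'}(\mathcal{F}')$: since $m'+1>m'$ the rightmost term vanishes, $\beta^*=0$, and exactness gives the claimed surjectivity of $\alpha^*$. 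For (iii), when $q>m+1$ both $H^{p,q-1}_{\mu^*f'}(\mathcal{F})$ and $H^{p,q}_{\mu^*f'}(\mathcal{F})$, the groups flanking $\beta^*:H^{p,q}_{f'}(\mu)\to H^{p,q}_{f'}(\mathcal{F}')$, vanish, so $\beta^*$ becomes an isomorphism. For (iv), when $q>m'$ both $H^{p,q}_{f'}(\mathcal{F}')$ and $H^{p,q+1}_{f'}(\mathcal{F}')$, flanking $\alpha^*:H^{p,q}_{\mu^*f'}(\mathcal{F})\to H^{p,q+1}_{f'}(\mu)$, vanish, so $\alpha^*$ is an isomorphism. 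Finally, (v) is immediate: for $q>\max\{m+1,m'\}$ both $H^{p,q-1}_{\mu^*f'}(\mathcal{F})$ and $H^{p,q}_{f'}(\mathcal{F}')$ are zero, squeezing $H^{p,q}_{f'}(\mu)$ between them and forcing it to vanish.

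No real obstacle is expected; the whole corollary is an exercise in diagram chasing once the trivial chain-level vanishing of foliated $(p,q)$-forms above the leaf dimension is noted. The only point requiring care is the bookkeeping of the two dimensions: the $\mathcal{F}$-cohomologies are annihilated by the condition $q>m$ while the $\mathcal{F}'$-cohomologies are annihilated by $q>m'$, and one must match each group in (\ref{II14}) with the correct bound before applying exactness.
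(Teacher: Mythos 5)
Your proof is correct and is precisely the argument the paper intends: the paper states this corollary without proof, and the only input needed beyond the long exact sequence (\ref{II14}) is the chain-level vanishing $\Omega^{p,q}(\mathcal{F})=0$ for $q>m$ and $\Omega^{p,q}(\mathcal{F}')=0$ for $q>m'$ (visible in the paper from the fact that the $\overline{\partial}_{\mathcal{F}}$-complex terminates at $\Omega^{p,m}(\mathcal{F})$), after which each item is the standard exactness argument you carry out. Your bookkeeping of which bound ($m$ versus $m'$) annihilates which group is also correct throughout.
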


Finally, we notice that for $f=1$ the operator $\widetilde{\overline{\partial}}_{\mathcal{F}}$ defined by $\widetilde{\overline{\partial}}_{\mathcal{F}}(\varphi, \psi)=(\overline{\partial}_{\mathcal{F}^{'}}^{'}\varphi, \mu^*\varphi-\overline{\partial}_{\mathcal{F}}\psi)$ satisfies a Dolbeault type Lemma, namely
\begin{theorem}
(\cite{I}). Let $\varphi$ be a  foliated differential form of type $(p,q)$ defined on $U^{'}\subset \mathcal{M}^{'}$ and $\psi$ be a foliated differential form of type $(p,q-1)$ defined on $U\subset \mathcal{M}$ such that $\widetilde{\overline{\partial}}_{\mathcal{F}}(\varphi, \psi)=(0, 0)$. Then, there exists a foliated differential form $\varphi_1$ of type $(p,q-1)$ defined on $V^{'}\subset U^{'}$ and a foliated differential form $\theta_1$ of type $(p,q-2)$ defined on $V\subset U$ and such that $(\varphi, \psi)=\widetilde{\overline{\partial}}_{\mathcal{F}}(\varphi_1, \psi_1)$.
\end{theorem}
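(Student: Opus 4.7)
The plan is to reduce the statement to two successive applications of the leafwise Dolbeault--Grothendieck lemma (one on $\mathcal{M}'$ and one on $\mathcal{M}$), exploiting the intertwining relation (\ref{II11}) to transport a primitive of $\varphi$ across $\mu$. The hypothesis $\widetilde{\overline{\partial}}_{\mathcal{F}}(\varphi,\psi)=(0,0)$ unpacks (for $f=1$) as the two conditions $\overline{\partial}_{\mathcal{F}'}'\varphi=0$ and $\mu^{*}\varphi=\overline{\partial}_{\mathcal{F}}\psi$, and what we must produce is a pair $(\varphi_{1},\psi_{1})$ satisfying $\overline{\partial}_{\mathcal{F}'}'\varphi_{1}=\varphi$ and $\mu^{*}\varphi_{1}-\overline{\partial}_{\mathcal{F}}\psi_{1}=\psi$.

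First, I would pick an adapted polydisc chart $V'\subset U'$ for the complex foliation $\mathcal{F}'$ on $\mathcal{M}'$ and invoke the leafwise Dolbeault--Grothendieck lemma for $\overline{\partial}_{\mathcal{F}'}'$ recalled at the end of Subsection 1.2: since $\overline{\partial}_{\mathcal{F}'}'\varphi=0$ on $V'$, one obtains $\varphi_{1}\in\Omega^{p,q-1}(\mathcal{F}'|_{V'})$ with $\overline{\partial}_{\mathcal{F}'}'\varphi_{1}=\varphi$. This settles the first component of the required identity $\widetilde{\overline{\partial}}_{\mathcal{F}}(\varphi_{1},\psi_{1})=(\varphi,\psi)$.

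Next, I would choose a foliated polydisc $V\subset U\cap\mu^{-1}(V')$ in $\mathcal{M}$ (shrinking if necessary so that the leafwise Dolbeault--Grothendieck lemma applies on $V$ as well), and compute on $V$, using the commutation formula (\ref{II11}),
\begin{equation*}
\overline{\partial}_{\mathcal{F}}\bigl(\mu^{*}\varphi_{1}-\psi\bigr)
=\mu^{*}(\overline{\partial}_{\mathcal{F}'}'\varphi_{1})-\overline{\partial}_{\mathcal{F}}\psi
=\mu^{*}\varphi-\overline{\partial}_{\mathcal{F}}\psi=0.
\end{equation*}
Thus $\mu^{*}\varphi_{1}-\psi$ is a $\overline{\partial}_{\mathcal{F}}$-closed foliated form of type $(p,q-1)$ on $V$, and a second application of the leafwise Dolbeault--Grothendieck lemma (on $\mathcal{M}$ this time) produces $\psi_{1}\in\Omega^{p,q-2}(\mathcal{F}|_{V})$ with $\overline{\partial}_{\mathcal{F}}\psi_{1}=\mu^{*}\varphi_{1}-\psi$. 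Assembling the two pieces gives $\widetilde{\overline{\partial}}_{\mathcal{F}}(\varphi_{1},\psi_{1})=(\overline{\partial}_{\mathcal{F}'}'\varphi_{1},\,\mu^{*}\varphi_{1}-\overline{\partial}_{\mathcal{F}}\psi_{1})=(\varphi,\psi)$, as desired.

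The only real subtlety I anticipate is the compatibility of the two domains: one must choose $V'$ to be a distinguished polydisc chart adapted to $\mathcal{F}'$ so that the first Dolbeault--Grothendieck step works, and then $V$ must be taken inside $\mu^{-1}(V')$ and simultaneously be a distinguished polydisc chart adapted to $\mathcal{F}$ so that the second step works. Because $\mu$ is a morphism of complex foliations (so it is continuous and sends leaves to leaves), such a $V$ exists by shrinking a foliated chart of $\mathcal{M}$ around any point of $\mu^{-1}(V')\cap U$; this is where the local nature of the statement is essential. The low-degree edge case $q=1$ is handled by the convention that $\psi_{1}$ is absent (equivalently, $\overline{\partial}_{\mathcal{F}}\psi_{1}=0$), in which case $\mu^{*}\varphi_{1}-\psi$ is already zero after adjusting $\varphi_{1}$ by an appropriate leafwise holomorphic $(p,0)$-form; no obstruction arises because the adjustment is absorbed into the $\overline{\partial}_{\mathcal{F}'}'$-primitive.
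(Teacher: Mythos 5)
The paper itself offers no proof of this theorem: it is quoted verbatim from the author's reference \cite{I}, so there is no in-text argument to compare yours against. That said, your proof is the natural one and is almost certainly the intended argument: unpack $\widetilde{\overline{\partial}}_{\mathcal{F}}(\varphi,\psi)=(0,0)$ into $\overline{\partial}'_{\mathcal{F}'}\varphi=0$ and $\mu^*\varphi=\overline{\partial}_{\mathcal{F}}\psi$, apply the leafwise Dolbeault--Grothendieck lemma on a distinguished chart $V'$ to get $\varphi_1$ with $\overline{\partial}'_{\mathcal{F}'}\varphi_1=\varphi$, use the intertwining relation (\ref{II11}) to see that $\mu^*\varphi_1-\psi$ is $\overline{\partial}_{\mathcal{F}}$-closed on $V\subset U\cap\mu^{-1}(V')$, and apply the lemma a second time to produce $\psi_1$. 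The domain bookkeeping you describe (first shrink on $\mathcal{M}'$, then shrink inside $\mu^{-1}(V')$ on $\mathcal{M}$) is exactly right and is the reason the conclusion is only asserted on smaller opens $V'\subset U'$, $V\subset U$. This is correct as it stands for $q\geq 2$, which is the regime the statement implicitly assumes since $\psi_1$ is required to have type $(p,q-2)$.

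The one place where your argument is not airtight is the $q=1$ edge case you flag at the end. There the second component of the required identity becomes $\mu^*\varphi_1=\psi$ on the nose, and your proposed fix --- absorbing the leafwise-holomorphic discrepancy $h=\mu^*\varphi_1-\psi$ into the choice of $\overline{\partial}'_{\mathcal{F}'}$-primitive --- requires $h$ to lie in the image of $\mu^*$ restricted to leafwise-holomorphic $(p,0)$-forms on $V'$, which is not automatic (for instance when $\mu$ is far from surjective onto $V'$ the image of $\mu^*$ can be a proper subspace). Since the theorem as stated presupposes a $(p,q-2)$-form $\psi_1$, you can simply restrict to $q\geq 2$ and drop that paragraph; as written, the final sentence claims more than you have proved.
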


\noindent 
Cristian Ida\\
Department of Algebra, Geometry and Differential Equations\\
University Transilvania of Bra\c{s}ov, Rom\^{a}nia\\
Address: Bra\c{s}ov 500091, Str. Iuliu Maniu 50, Rom\^{a}nia\\
email:\textit{cristian.ida@unitbv.ro}


\begin{thebibliography}{11}
\bibitem{B-T} Bott, R. and Tu, L.W., {\em Differential Forms in Algebraic Topology}, Graduate Text in Math., \textbf{82}, Springer-Verlag, Berlin, 1982.
\bibitem{E-K1} El Kacimi Alaoui, A., {\em The $\overline{\partial}$ operator along the leaves and Guichard's theorem for a complex simple foliation}, Math. Ann. \textbf{347} (4) (2010), 885-897.
\bibitem{E-K2} El Kacimi Alaoui, A., {\em Cohomologie de Dolbeault le long des feuilles de certains feuilletages complexes}, Ann. Inst. Fourier, Grenoble \textbf{60} (1) (2010), 727-757.
\bibitem{I} Ida, C., {\em A relative Dolbeault cohomology along the leaves of complex foliations}, submmited.
\bibitem{Mi-P} Miron, R. and Pop, I., \textit{Algebraic Topology}, (in romanian)
Ed. Acad. RSR, Buc. 1974.
\bibitem{Mo1} Monnier, P., \textit{Computations of Nambu-Poisson cohomologies}, Int. J. Math. Math. Sci. \textbf{26} (2) (2001) 65-81.
\bibitem{Mo2} Monnier, P., \textit{A cohomology attached to a function}, Diff. Geometry and Applications \textbf{22} (2005) 49-68
\bibitem{Mo} Moore, C. C. and Schochet, C., {\em Global Analysis on Foliated Spaces}, MSRI Publications, \textbf{9}, Springer-Vedag, New York, 1988.
\bibitem{Mo-Ko} Morrow, J. and Kodaira, K., \textit{Complex Manifolds}, AMS Chelsea Publ., 1971.
\bibitem{S} Schweitzer, M., \textit{Autour de la cohomologie de Bott-Chern}, arXiv:0709.3528v1[math. AG] 21 Sep 2007.
\bibitem{Te} Tevdoradze, Z., {\em Vertical cohomologies and their application to completely integrable Hamiltonian systems}, Georgian Math. J. \textbf{5} (5), (1998), 483-500.
\bibitem{Va} Vaisman, I., \textit{From the geometry of hermitian foliate manifolds}, Bull. Math. Soc. Sci. Math. Roumanie, \textbf{17} (1973), 71-100.
\bibitem{Va1} Vaisman, I., {\em Cohomology and differential forms}, New York, M. Dekker Publ. House, 1973.
\bibitem{Va2} Vaisman, I., \textit{A class of complex analytic foliate manifolds with rigid structure}, J. Geom. Diff. \textbf{(12)} (1977), 119-131.
\end{thebibliography}
\end{document}